\newtheorem{theorem}{Theorem}[section]
\newtheorem{lemma}[theorem]{Lemma}
\newtheorem{cor}[theorem]{Corollary}
\theoremstyle{definition}
\theoremstyle{remark}
\newtheorem{remark}[theorem]{Remark}
\numberwithin{equation}{section}
\newcommand{\abs}[1]{\left\lvert#1\right\rvert}
\newcommand{\ds}{\displaystyle}
\newcommand{\CA}{\mathcal{A}}
\newcommand{\CB}{\mathcal{B}}
\newcommand{\CC}{\mathcal{C}}
\newcommand{\CH}{\mathcal{H}}
\newcommand{\CI}{\mathcal{I}}
\newcommand{\CJ}{\mathcal{J}}
\newcommand{\CN}{\mathcal{N}}
\begin{document}

\title[Poisson phenomena for points on hyperelliptic curves]
{Poisson type phenomena for points on hyperelliptic curves modulo $p$}

\author{Kit-Ho Mak}
\address{Department of Mathematics \\
University of Illinois at Urbana-Champaign \\
273 Altgeld Hall, MC-382 \\
1409 W. Green Street \\
Urbana, Illinois 61801, USA}
\email{mak4@illinois.edu}

\author{Alexandru Zaharescu}
\address{Department of Mathematics \\
University of Illinois at Urbana-Champaign \\
273 Altgeld Hall, MC-382 \\
1409 W. Green Street \\
Urbana, Illinois 61801, USA}
\email{zaharesc@math.uiuc.edu}

\subjclass[2000]{Primary 11G20; Secondary 11T99}
\keywords{Poisson distribution, hyperelliptic curves}

\thanks{The second author is supported by NSF grant number DMS - 0901621.}

\begin{abstract}
Let $p$ be a large prime,  and let $C$ be a hyperelliptic curve over $\mathbb{F}_p$. We study the distribution of the $x$-coordinates in short intervals when the $y$-coordinates lie in a prescribed interval, and the distribution of the distance between consecutive $x$-coordinates with the same property.  Next, let $g(P,P_0)$ be a rational function of two points on $C$. We  study the distribution of the above distances with an extra condition that $g(P_i,P_{i+1})$ lies in a prescribed interval, for any consecutive points $P_i,P_{i+1}$.
\end{abstract}

\maketitle

\section{Introduction}

Let $p$ be a large prime. In \cite{CoZa98}, Cobeli and one of the authors considered the distribution of $r$-tuples of primitive roots modulo $p$. They showed that the distribution of primitive roots becomes Poissonian
as $p$ tends to infinity via a sequence of primes
such that $\varphi(p-1)/p\rightarrow 0$. Moreover, they showed that the proportion of distances between consecutive primitive roots which are at least $\lambda$ times larger than the average value $p/\varphi(p-1)$ tends to $e^{-\lambda}$. In this paper, we employ an analogous technique to study $r$-tuples of $x$-coordinates on a hyperelliptic curve modulo a large prime number $p$.

Let $C$ be a hyperelliptic curve over $\mathbb{F}_p$ defined by the equation $y^2=f(x)$, $f$ not a square. Let $\CI$ be an interval inside $[0,(p-1)/2]$ with $\abs{\CI} \geq p/\log\log{p}$, $\abs{\CI}=o(p)$. We consider the $x$-coordinates of the points $(x,y)\in C$ with $y\in\CI$, and denote them  $0\leq x_1<\dots<x_m\leq p-1$.
We study the distribution of the number of such $x_i$'s in $(x,x+t]$, where $x$ itself is one of such $x_i$'s, and $t\sim\lambda p/\abs{\CI}$. It turns out that under certain natural assumptions, as $p$ increases, the distribution approaches the Poisson distribution with parameter $\lambda$.

Next, we consider the proportion of distances between consecutive $x_i$'s which are at least $\lambda$ times greater than the asymptotic average $p/\abs{\CI}$, that is,
\begin{equation*}
\mu(\lambda) = \frac{\#\{ i:1\leq i \leq m, x_{i+1}-x_i \geq \lambda p/\abs{\CI} \}}{m}
\end{equation*}
where $m$ is the total number of such $x_i$, and $x_{m+1}=x_1+p$. As $p$ tends to infinity, we show that the limit of $\mu_p(\lambda)$ tends to $e^{-\lambda}$, and moreover, that this convergence is uniform on compact subsets of $[0,\infty)$.

Lastly, we go a step further and investigate to what extent the above Poisson distribution might be distorted via a rational function $g(P,P_0)$ of two points $P,P_0$ on the curve $C$. This builds on, and extends some ideas from \cite{VaZa02}.
More precisely, we study the distribution of the number of $x_i$'s in $(x,x+t]$ as above, but with the extra condition that $g(P_{i},P_{i+1})\in\CJ$, where $\CJ=[\alpha p,\beta p)$, and $P_i=(x_i,y_i), P_{i+1}=(x_{i+1},y_{i+1})$ are points on $C$ with $y_i,y_{i+1}\in\CI$. The resulting distribution is again Poisson, but with a different parameter $\lambda'=\lambda(\beta-\alpha)$. Regarding the proportion of distances between consecutive $x_i$'s satisfying the above extra condition, that is,
\begin{equation}\label{defm}
\mu(\lambda,\alpha,\beta)=\frac{\#\{ i:1\leq i \leq m, x_{i+1}-x_i \geq \lambda p/\abs{\CI},g(P_{i},P_{i+1})\in [\alpha p,\beta p] \}}{m},
\end{equation}
we show that as $p$ tends to infinity, $\mu_p(\lambda,\alpha,\beta)$ tends to $e^{-\lambda(\beta-\alpha)}$.

As an application of our results, we will derive a  result which shows how the distribution of distances between the $x$-coordinates of points on an elliptic curve $C$ is affected
by the group law of $C$. This is our original motivation for studying this problem.

\section{Distribution of Values of Rational Maps on an affine curve in a hypercube modulo $p$}

Since the Poisson distribution of $x$-coordinates in short intervals without any distortion $g$ and its corresponding limit distribution of consecutive difference can be derived from the case with distortion by simply setting $\CJ=[0,p)$, i.e. $\alpha=0, \beta=1$, we will proceed directly to prove the results when distortion exists, and derive the case without distortion as a corollary.

Let $p$ be a large prime number, and let $X$ be an irreducible affine curve over $\mathbb{A}^r_p$, the affine $r$-space over $\mathbb{F}_p$, given by the set of equations $f_i(\boldsymbol{x})=0$, where $\boldsymbol{x}=(x_1,\dots,x_r)$, $1\leq i \leq k$. By the well known Weil bounds for space curves \cite{AuPe96}  (note that in our case $X$ is affine instead of projective) we know that

\begin{equation}\label{weilboundforX}
\abs{ \#X-p } \leq 2g_a \sqrt{p},
\end{equation}
where $g_a$ denotes the arithmetic genus of $X$. Note that this formula works even when $X$ is singular.

Let $\boldsymbol{g}=(g_1,\dots,g_s)$ be a rational map from $X$ to $\mathbb{A}^s_p$. Thus each $g_i$ is a quotient of polynomials in $\mathbb{F}_p[x_1,\dots,x_r]$. Recall that the degree $\deg{(g_i)}$ of $g_i$ is defined as the maximum between the degree of its numerator and the degree of its denominator. Define the degree of the rational map $\boldsymbol{g}$ to be $\deg{(\boldsymbol{g})} := \max_{1\leq i\leq s}\deg{(g_i)}$.

For the convenience of the reader, we recall the notion of linear independence on a curve $X$.
A set of functions $\{g_1, \dots, g_s\}$ is linearly independent provided that
if $c_1,\dots,c_s\in\overline{\mathbb{F}_p}$ are such that $c_1g_1(\boldsymbol{x})+\dots+c_sg_s(\boldsymbol{x})=0$ on the curve $X$, then $c_1=\dots=c_s=0$.

Let $\CI_1, \dots, \CI_r$ be intervals in $[0,p)$, and we view $\CI_1\times\dots\times\CI_r \subset \mathbb{A}^r$ as a hypercube in the domain for which $X$ is defined. Similarly, let $\CJ_1,\dots, \CJ_s$ be intervals in $[0,p)$, and view $\CJ_1\times\dots\times\CJ_s \subset \mathbb{A}^s$ as a hypercube in the range of the rational map $\boldsymbol{g}=(g_1,\dots,g_s)$. We define
\begin{equation*}
\CN(X) = \#\{\boldsymbol{x}\in X\cap(\CI_1 \times \dots \times \CI_r)|\boldsymbol{x} \text{~is not a pole of~} \boldsymbol{g}, \boldsymbol{g(x)}\in \CJ_1\times\dots\times\CJ_s \}
\end{equation*}
to be the number of points on $X$ lying inside the hypercube $\CI_1 \times \dots \times \CI_r$, whose images under $\boldsymbol{g}$ lie inside the hypercube $\CJ_1\times\dots\times\CJ_s$. The main result of this section is the following theorem, which may be regarded as a uniform distribution result, where the intervals $\CI_i, \CJ_j$ are not too small.

\begin{theorem}\label{ptsincube}
Let $X$ be as above, of degree $d>1$, and let $\abs{\CI}$ denote the number of integers inside the interval $\CI$. Let $\boldsymbol{g}=(g_1,\dots,g_s)$ be a rational map with $1\leq\deg{\boldsymbol{g}}<d$. Let $p$ be a large prime, and assume that the set of functions $\{1, x_1, \dots, x_r, g_1(\boldsymbol{x}), \dots, g_s(\boldsymbol{x})\}$ is linearly independent on $X$. Then
\begin{equation*}
\abs{ \CN(X) - \frac{\abs{\CI_1}\dots\abs{\CI_r}\abs{\CJ_1}\dots\abs{\CJ_s}}{p^{r+s-1}} } \leq 2^{r+s} d(d-1)\sqrt{p}\log^{r+s}{p} + O(\sqrt{p}\log^{r+s-1}{p}).
\end{equation*}
\end{theorem}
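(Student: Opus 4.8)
The plan is to detect each interval-membership condition with additive characters modulo $p$ and thereby reduce $\CN(X)$ to a main term plus a family of exponential sums along $X$, which the linear independence hypothesis forces to be nonconstant. Write $e_p(t)=e^{2\pi i t/p}$, and for an interval $\CI\subseteq[0,p)$ put $c_\CI(a)=\sum_{u\in\CI}e_p(-au)$, so that $\mathbf{1}_\CI(n)=\frac1p\sum_{a\bmod p}c_\CI(a)e_p(an)$, with $c_\CI(0)=\abs{\CI}$ and $\abs{c_\CI(a)}\le\abs{\sin(\pi a/p)}^{-1}$ for $a\ne0$. Writing $X'$ for the set of $\boldsymbol{x}\in X$ that are not poles of $\boldsymbol{g}$, I would start from
\[
\CN(X)=\sum_{\boldsymbol{x}\in X'}\,\prod_{i=1}^{r}\mathbf{1}_{\CI_i}(x_i)\prod_{j=1}^{s}\mathbf{1}_{\CJ_j}(g_j(\boldsymbol{x}))
\]
and expand every indicator into characters, so that $\CN(X)$ becomes $p^{-(r+s)}$ times a sum over all tuples $(\boldsymbol{a},\boldsymbol{b})=(a_1,\dots,a_r,b_1,\dots,b_s)\bmod p$ of $\prod_i c_{\CI_i}(a_i)\prod_j c_{\CJ_j}(b_j)$ times the exponential sum $S(\boldsymbol{a},\boldsymbol{b})=\sum_{\boldsymbol{x}\in X'}e_p(\sum_i a_ix_i+\sum_j b_jg_j(\boldsymbol{x}))$.

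The tuple $(\boldsymbol{a},\boldsymbol{b})=\boldsymbol{0}$ produces the main term. Here $S(\boldsymbol0,\boldsymbol0)=\#X'$, and since the pole locus of $\boldsymbol{g}$ on $X$ is finite of size $O(1)$, the Weil bound \eqref{weilboundforX} gives $\#X'=p+O(\sqrt p)$. As the normalized volume $\abs{\CI_1}\cdots\abs{\CI_r}\abs{\CJ_1}\cdots\abs{\CJ_s}/p^{r+s}$ is at most $1$, the main term equals
\[
\frac{\abs{\CI_1}\cdots\abs{\CI_r}\abs{\CJ_1}\cdots\abs{\CJ_s}}{p^{r+s-1}}+O(\sqrt p),
\]
which accounts for the claimed principal term up to an admissible error.

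It remains to bound the contribution of the tuples $(\boldsymbol{a},\boldsymbol{b})\ne\boldsymbol0$. For such a tuple the rational function $h=\sum_i a_ix_i+\sum_j b_jg_j$ is nonconstant on $X$, which is exactly what the linear independence of $\{1,x_1,\dots,x_r,g_1,\dots,g_s\}$ guarantees; and since $\deg h<p$, $h$ is not of the shape $\ell^p-\ell+c$. Hence Weil's (Bombieri's) bound for exponential sums along a curve applies: bounding the genus of the normalization of $X$ by $(d-1)(d-2)/2$ and the degree of the polar divisor of $h$ by $d$ (which uses $\deg\boldsymbol{g}<d$) yields
\[
\abs{S(\boldsymbol{a},\boldsymbol{b})}\le\bigl((d-1)(d-2)-2+2d\bigr)\sqrt p=d(d-1)\sqrt p.
\]
Summing the Fourier coefficients and using $\frac1p\sum_{a\bmod p}\abs{c_\CI(a)}\le 2\log p$ for each of the $r+s$ intervals, the product of these factors is at most $2^{r+s}\log^{r+s}p$, so the total error is $\le 2^{r+s}d(d-1)\sqrt p\log^{r+s}p+O(\sqrt p\log^{r+s-1}p)$, giving the theorem.

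The main obstacle is the uniform exponential sum estimate for $S(\boldsymbol{a},\boldsymbol{b})$ with the explicit constant $d(d-1)$. Nonconstancy of $h$ is handed to us by the linear independence hypothesis, so Weil's theorem immediately produces a bound of size $O_d(\sqrt p)$; the real work is to pin the constant to $d(d-1)$ uniformly in $(\boldsymbol{a},\boldsymbol{b})$, by controlling the genus of $X$ and the pole degree of $h$ purely in terms of $d$, which is precisely the role of the hypotheses $\deg X=d$ and $\deg\boldsymbol{g}<d$. A secondary, routine point is the bookkeeping of the excluded poles of $\boldsymbol{g}$ and the endpoint effects of the intervals, both of which are absorbed into the lower-order term $O(\sqrt p\log^{r+s-1}p)$.
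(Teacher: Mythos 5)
Your outline reproduces the paper's proof almost step for step: the same orthogonality/character expansion of the indicator functions (the paper's Lemma \ref{lemmaNX}), the same extraction of the main term from the zero frequency via the Weil bound \eqref{weilboundforX}, the same $O(\log p)$ bound per interval on the summed Fourier coefficients (Lemma \ref{est1}), and the same bookkeeping so that the top-order contribution $2^{r+s}d(d-1)\sqrt{p}\log^{r+s}p$ comes from tuples with all components nonzero, the rest being absorbed into $O(\sqrt{p}\log^{r+s-1}p)$. The one point of divergence is the key uniform estimate $\abs{S(\boldsymbol{a},\boldsymbol{b})}\le d(d-1)\sqrt{p}$, and there your justification has a genuine gap.

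You claim that the polar divisor of $h=\sum_i a_ix_i+\sum_j b_jg_j$ on $X$ has degree at most $d$. This is not justified, and it is false in general: each $g_j$ is a ratio of polynomials of degree up to $d-1$, so by Bezout its denominator can vanish on the degree-$d$ curve $X$ in as many as $d(d-1)$ points, and the polar divisor of $h$ can have degree of order $s\,d(d-1)$, plus up to $d$ further poles at infinity coming from the coordinate functions $x_i$. Feeding the correct pole bound into the Weil-type estimate $(2g-2+2\deg(h)_\infty)\sqrt{p}$ produces a constant of order $s d^2$, depending on $s$, so your arithmetic $\bigl((d-1)(d-2)-2+2d\bigr)\sqrt{p}=d(d-1)\sqrt{p}$ does not survive. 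The paper sidesteps this entirely in Lemma \ref{estexp} by invoking Bombieri's Theorem 6 of \cite{Bom66}, applied to the projective closure of $X$: for a nonconstant rational function of degree $<d$ on an absolutely irreducible projective curve of degree $d$ (nonconstancy is exactly what the linear independence hypothesis supplies, and the Artin--Schreier shape $\ell^p-\ell+c$ is excluded for large $p$, as you note), it yields $\abs{S}\le d(d-1)\sqrt{p}+\tfrac12 d^2$ uniformly in $(\boldsymbol{a},\boldsymbol{b})$, the $\tfrac12 d^2$ being absorbed into the secondary error term. So your reduction is the right one, but the step you yourself identify as ``the real work'' cannot be settled by genus-plus-pole-degree bookkeeping with the pole degree you asserted; it needs Bombieri's theorem (or an argument of comparable strength) quoted as such.
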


\begin{remark}
The uniform distribution problem of rational points over an irreducible variety in a hypercube was investigated by Myerson \cite{Mye81}, and also Fujiwara \cite{Fuj88} (see also \cite{CoZa01} for the case of curves, but with more general regions). Their results were improved in the case for complete intersections by Shparlinski and Skorobogatov \cite{ShSk90}, Skorobogatov \cite{Sko92} and Luo \cite{Luo99}. On the other hand, the uniform distribution problem of rational maps was investigated by Vajaitu and one of the authors \cite{VaZa02} (see also \cite{Zah03} and \cite{GSZ05} for other related distribution problems). Here, we combine both ideas, and at the same time produce an explicit error term for later use.
\end{remark}

The first step in the proof of Theorem \ref{ptsincube} is to rewrite $\CN(X)$ as an exponential sum.

\begin{lemma}\label{lemmaNX}
Denote $e_p(y)=e^{2\pi i y/p}$, and let $T=\{(t_1,\dots,t_r): \abs{t_j}\leq (p-1)/2\}$ and $U=\{(u_1,\dots,u_s): \abs{u_j}\leq (p-1)/2\}$. We have
\begin{align*}
\CN(X) &= \frac{1}{p^{r+s}}\sum_{(t_1,\dots,t_r) \in T} \prod_{1\leq i \leq r} \left( \sum_{m_i\in\CI_i}e_p(t_i m_i) \right) \\
&\qquad\times\sum_{(u_1,\dots,u_s) \in U}\prod_{1\leq j \leq s} \left( \sum_{n_j\in\CJ_j}e_p(u_j n_j) \right) \\
&\qquad\qquad\times\sideset{}{'}\sum_{\substack{\boldsymbol{x}\in X \\ 0\leq x_i\leq p-1}} e_p(-u_1g_1(\boldsymbol{x})-\dots-u_sg_s(\boldsymbol{x})-t_1x_1-\dots -t_rx_r),
\end{align*}
where $\sideset{}{'}\sum$ means we ignore the poles of the $g_i$'s when summing.
\end{lemma}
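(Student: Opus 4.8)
The plan is to detect each of the membership conditions defining $\CN(X)$ by means of the orthogonality relation for additive characters modulo $p$, and then expand everything into a single multiple sum. The relation I would use is that for any integer $a$,
\[
\frac{1}{p}\sum_{\abs{t}\leq (p-1)/2} e_p(ta) = \begin{cases} 1 & \text{if } p\mid a,\\ 0 & \text{otherwise,}\end{cases}
\]
which holds because the values $t$ with $\abs{t}\leq(p-1)/2$ form a complete residue system modulo $p$.

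First I would write $\CN(X)$ as a sum of a product of indicator functions. For a non-pole $\boldsymbol{x}\in X$ with $0\leq x_i\leq p-1$, the condition $x_i\in\CI_i$ holds exactly when $x_i\equiv m_i\pmod p$ for some $m_i\in\CI_i$ (using $\CI_i\subset[0,p)$), so that
\[
\mathbf{1}_{x_i\in\CI_i}=\sum_{m_i\in\CI_i}\frac{1}{p}\sum_{\abs{t_i}\leq (p-1)/2} e_p\bigl(t_i(m_i-x_i)\bigr).
\]
Likewise, since for a non-pole $\boldsymbol{x}$ the value $g_j(\boldsymbol{x})$ is a well-defined element of $\mathbb{F}_p$, identified with its representative in $[0,p)$, the condition $g_j(\boldsymbol{x})\in\CJ_j$ is equivalent to $g_j(\boldsymbol{x})\equiv n_j\pmod p$ for some $n_j\in\CJ_j$, and hence
\[
\mathbf{1}_{g_j(\boldsymbol{x})\in\CJ_j}=\sum_{n_j\in\CJ_j}\frac{1}{p}\sum_{\abs{u_j}\leq (p-1)/2} e_p\bigl(u_j(n_j-g_j(\boldsymbol{x}))\bigr).
\]

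Next I would substitute these into $\CN(X)=\sideset{}{'}\sum_{\boldsymbol{x}}\prod_{i}\mathbf{1}_{x_i\in\CI_i}\prod_{j}\mathbf{1}_{g_j(\boldsymbol{x})\in\CJ_j}$ (sum over non-poles $\boldsymbol{x}\in X$ with $0\leq x_i\leq p-1$) and expand the products. All sums are finite, so interchanging their order is immediate; I would then collect the $r+s$ factors of $1/p$ into the constant $1/p^{r+s}$, group the outer sums over $(t_1,\dots,t_r)\in T$ and $(u_1,\dots,u_s)\in U$, and split each kernel via $e_p(t_i(m_i-x_i))=e_p(t_im_i)e_p(-t_ix_i)$ and $e_p(u_j(n_j-g_j(\boldsymbol{x})))=e_p(u_jn_j)e_p(-u_jg_j(\boldsymbol{x}))$. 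The factors $e_p(t_im_i)$ and $e_p(u_jn_j)$ depend only on the outer variables and assemble into $\prod_i\bigl(\sum_{m_i\in\CI_i}e_p(t_im_i)\bigr)$ and $\prod_j\bigl(\sum_{n_j\in\CJ_j}e_p(u_jn_j)\bigr)$, while the remaining factors combine into the inner sum $\sideset{}{'}\sum_{\boldsymbol{x}}e_p(-u_1g_1(\boldsymbol{x})-\dots-u_sg_s(\boldsymbol{x})-t_1x_1-\dots-t_rx_r)$, which is the claimed identity.

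There is essentially no analytic obstacle here; the one point requiring care is the treatment of the poles. I would stress that the $\sideset{}{'}\sum$ restricts the summation to those $\boldsymbol{x}\in X$ that are not poles of any $g_j$, so that every $g_j(\boldsymbol{x})$ entering a character is a genuine element of $\mathbb{F}_p$ and the detection identity for $g_j(\boldsymbol{x})\in\CJ_j$ is valid; this same restriction sits on both sides, so no term is gained or lost. Finally, the sign conventions match the statement because each inner summation runs symmetrically over $\abs{t_i},\abs{u_j}\leq(p-1)/2$, so replacing a variable by its negative leaves the sum unchanged and turns the detecting kernels into the displayed form.
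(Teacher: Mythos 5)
Your proposal is correct and follows essentially the same route as the paper: detecting the conditions $x_i\in\CI_i$ and $g_j(\boldsymbol{x})\in\CJ_j$ via orthogonality of additive characters over a complete residue system, multiplying the indicators, and rearranging the finite sums. Your added remarks on the pole restriction are sound (and slightly more careful than the paper's write-up), though the final symmetry observation is unnecessary, since expanding $e_p(t_i(m_i-x_i))=e_p(t_im_i)e_p(-t_ix_i)$ already yields the displayed signs directly.
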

\begin{proof}
From the orthogonal relation of the exponential sum
\begin{equation*}
\sum_{\abs{t_i}\leq (p-1)/2} e_p(t_i(m_i-x_i)) =
\begin{cases}
p & \text{if}~ x_i=m_i, \\
0 & \text{if}~ x_i\neq m_i,
\end{cases}
\end{equation*}
we sum over all possible $m_i\in\CI_i$ to get
\begin{equation*}
\frac{1}{p}\sum_{m_i\in\CI_i}\sum_{\abs{t_i}\leq (p-1)/2} e_p(t_i(m_i-x_i)) =
\begin{cases}
1 & \text{if}~ x_i\in\CI_i, \\
0 & \text{if}~ x_i\notin\CI_i.
\end{cases}
\end{equation*}
In the same spirit, we get
\begin{equation*}
\frac{1}{p}\sum_{n_j\in\CJ_j}\sum_{\abs{u_j}\leq (p-1)/2} e_p(u_j(n_j-g_j(\boldsymbol{x})) =
\begin{cases}
1 & \text{if}~ g_j(\boldsymbol{x})\in\CJ_j, \\
0 & \text{if}~ g_j(\boldsymbol{x})\notin\CJ_j.
\end{cases}
\end{equation*}
Therefore,
\begin{multline*}
\frac{1}{p^{r+s}}\prod_{1\leq i \leq r}\prod_{1\leq j \leq s} \sum_{m_i\in\CI_i}\sum_{\abs{t_i}\leq (p-1)/2}\sum_{n_j\in\CJ_j}\sum_{\abs{u_j}\leq (p-1)/2} e_p(t_i(m_i-x_i)) e_p(u_j(n_j-h_j(\boldsymbol{x})) \\
=\begin{cases}
1 & \text{if}~ \boldsymbol{x}\in\CI_1\times\dots\times\CI_r \text{~and}~ \boldsymbol{g(x)}\in\CJ_1\times\dots\times\CJ_s, \\
0 & \text{otherwise}.
\end{cases}
\end{multline*}
Finally, $\CN(X)$ is the sum of the above quantity over all possible $\boldsymbol{x}\in X$. By rearranging terms on the repeated sums we get the lemma.

\end{proof}

The main term of $\CN(X)$ corresponds to the term with all $t_i=u_j=0$ in the above lemma, which is
\[\frac{1}{p^{r+s}}\abs{\CI_1}\dots\abs{\CI_r}\abs{\CJ_1}\dots\abs{\CJ_s}\#X(\mathbb{F}_p).\]
By (\ref{weilboundforX}), this is
\begin{align}
\text{main term~} &= \frac{1}{p^{r+s}}\abs{\CI_1}\dots\abs{\CI_r}\abs{\CJ_1}\dots\abs{\CJ_s}(p+O(\sqrt{p})) \nonumber \\
&= \frac{1}{p^{r+s-1}}\abs{\CI_1}\dots\abs{\CI_r}\abs{\CJ_1}\dots\abs{\CJ_s}+O((r+s)\sqrt{p}). \label{maintermNX}
\end{align}

The following two lemmas estimate the remaining terms.
\begin{lemma}\label{est1}
Let $p$ be a large prime. For any interval $\CI$, we have
\begin{equation*}
\abs{\sum_{1\leq \abs{t} \leq \frac{p-1}{2}} \sum_{m\in\CI}e_p(tm)} \leq 2p\log{p}.
\end{equation*}
\end{lemma}
\begin{proof}
Let $\CI\cap\mathbb{Z} = \{l,l+1,\dots,l+h-1\}$, where $h=\abs{\CI}$. Then
\begin{equation*}
\sum_{m\in\CI}e_p(tm) =
\begin{cases}
h & \text{if}~ t=0, \\
\left(e^{\frac{-2\pi itl}{p}}\right)\frac{1-e^{-2\pi ith/p}}{1-e^{-2\pi it/p}} & \text{if}~ t\neq 0.
\end{cases}
\end{equation*}
Hence if $t\neq 0$,
\begin{equation*}
\abs{\sum_{m\in\CI}e_p(tm)} \leq \frac{2}{\abs{1-e^{-2\pi it/p}}}.
\end{equation*}
Since $\abs{1-e^{-2\pi it/p}} = 2\abs{\sin{(\pi t/p)}} \geq \frac{\pi\abs{t}}{p}$ for $p$ large enough, we obtain the estimate
\begin{equation*}
\abs{\sum_{m\in\CI}e_p(tm)} \leq \frac{2p}{\pi\abs{t}} \leq \frac{p}{\abs{t}}.
\end{equation*}
Finally, the lemma is obtained by summing over all $t$ with $1\leq\abs{t}\leq (p-1)/2$, using the elementary inequality
\begin{equation*}
1+\frac{1}{2}+\dots+\frac{1}{\frac{p-1}{2}} \leq \log{p}.
\end{equation*}
\end{proof}

\begin{lemma}\label{estexp}
If $(t_1,\dots,t_r,u_1,\dots,u_s)\neq(0,\dots,0)$, the degree $d$ of $X$ is greater than $1$, $1\leq\deg{(\boldsymbol{g})}<d$, and the set $\{1, x_1, \dots, x_r, g_1(\boldsymbol{x}), \dots, g_s(\boldsymbol{x})\}$ is linearly independent on $X$, then
\begin{equation*}
\abs{\sum_{\substack{\boldsymbol{x}\in X \\ 0\leq x_i\leq p-1}} e_p(-u_1g_1(\boldsymbol{x})-\dots-u_sg_s(\boldsymbol{x})-t_1x_1-\dots -t_rx_r)} \leq d(d-1)\sqrt{p}+\frac{1}{2}d^2.
\end{equation*}
\end{lemma}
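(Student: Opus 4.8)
The plan is to read the exponential sum as a single additive character sum along the curve $X$ and then to invoke the Riemann Hypothesis for curves. Setting $h=-u_1g_1-\dots-u_sg_s-t_1x_1-\dots-t_rx_r$, the quantity to be estimated is $S=\sideset{}{'}\sum_{\boldsymbol{x}\in X}e_p(h(\boldsymbol{x}))$, where $h$ is a rational function on $X$. The first thing I would verify is that the hypotheses force $S$ to be a genuinely non-degenerate character sum. Since $(t_1,\dots,t_r,u_1,\dots,u_s)\neq(0,\dots,0)$, the assumed linear independence of $\{1,x_1,\dots,x_r,g_1(\boldsymbol{x}),\dots,g_s(\boldsymbol{x})\}$ on $X$ guarantees that $h$ is \emph{not} constant on $X$: a relation $h\equiv c$ would rewrite as $t_1x_1+\dots+t_rx_r+u_1g_1+\dots+u_sg_s+c\cdot 1=0$ on $X$, forcing $t_i=u_j=c=0$, a contradiction. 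For $p$ large the pole orders of $h$ are all smaller than $p$, so $h$ is also not of the degenerate form $\ell^p-\ell+c$; this is precisely the input needed for Weil's bound to be nontrivial.

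Next I would apply the bound itself. Passing to the normalization $\widetilde{X}$ of the projective closure of $X$ and to the Artin--Schreier cover defined by $z^p-z=h$, the Grothendieck--Lefschetz trace formula expresses $S$ through the $L$-function of the associated sheaf, whose nontrivial inverse roots all have absolute value $\sqrt{p}$ by the Riemann Hypothesis for curves. This yields $\abs{S}\le B\sqrt{p}$, up to the points omitted from the primed sum, where $B=2g-2+\deg\mathfrak{f}$, $g$ is the genus of $\widetilde{X}$, and $\mathfrak{f}$ is the conductor of $h$ (its polar divisor together with one extra unit at each distinct pole). Equivalently, one may count points on the cover and feed this into the singular-curve Weil bound \eqref{weilboundforX} of Aubry--Perret, which is what lets the argument tolerate a singular $X$.

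The heart of the matter is the bookkeeping that turns $B$ into the explicit constant $d(d-1)$. Here the genus is controlled by the arithmetic genus of a degree-$d$ curve, $g\le (d-1)(d-2)/2$, while the polar contribution of $h$ is controlled by a B\'ezout estimate for the intersection of $X$ with the denominators (level sets) of the $g_j$ and with the hyperplane at infinity. This is exactly the step where the hypothesis $1\le\deg{\boldsymbol{g}}<d$ is used: it keeps $\deg\mathfrak{f}$ small relative to $X$, so that assembling the genus term with the conductor term gives $B\le d(d-1)$. The additive term $\tfrac12 d^2$ then arises from reconciling the clean sum over the smooth projective model with the primed affine sum: one must add back or discard the points at infinity of $X$ together with the finite poles of the $g_j$, each contributing at most $1$ in absolute value, and B\'ezout bounds this correction (and the accompanying singular/arithmetic-genus discrepancy) by $\tfrac12 d^2$.

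I expect this last geometric step to be the main obstacle. The difficulty is not in citing Weil's theorem but in pinning the constant down to $d(d-1)$ rather than a cruder multiple of $d^2$: one has to work through the normalization so that the Aubry--Perret bound applies to the possibly singular space curve, and one has to check carefully that the potentially numerous finite poles of the rational functions $g_j$ do not inflate the conductor beyond what $\deg{\boldsymbol{g}}<d$ permits. Getting these two estimates — the genus bound and the conductor bound — to combine cleanly is where the real work lies, and it is the natural place for the hypotheses $d>1$ and $\deg{\boldsymbol{g}}<d$ to be fully exploited.
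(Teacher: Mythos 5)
Your overall frame is the right one, and your verification of the nondegeneracy hypotheses is correct: since $(t_1,\dots,t_r,u_1,\dots,u_s)\neq(0,\dots,0)$, the linear independence of $\{1,x_1,\dots,x_r,g_1,\dots,g_s\}$ on $X$ forces $h=-\sum_j u_jg_j-\sum_i t_ix_i$ to be non-constant on $X$, and since the pole orders of $h$ are bounded independently of $p$, it cannot be of the Artin--Schreier form $\ell^p-\ell+c$ once $p$ is large. This is exactly the hypothesis check needed here, and it is what the paper's parenthetical remark about $1\leq\deg(\boldsymbol{g})<d$ refers to.

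The genuine gap is the one you flagged yourself: the lemma \emph{is} the explicit inequality with constants $d(d-1)$ and $\frac{1}{2}d^2$, and your proposal never produces them. The paper's proof is a one-line citation --- apply Theorem 6 of \cite{Bom66} to the projective closure of $X$ --- and that theorem already packages the genus/conductor bookkeeping into an explicit bound in terms of the degree data, so nothing remains to be done. Your plan instead amounts to reproving Bombieri's theorem, and the accounting you sketch does not close. For your bound $(2g-2+\deg\mathfrak{f})\sqrt{p}$ to be at most $d(d-1)\sqrt{p}$, given $2g-2\leq (d-1)(d-2)-2$, you would need $\deg\mathfrak{f}\leq 2d$, i.e.\ a polar divisor of $h$ of degree about $d$. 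But B\'ezout only bounds the polar divisor of $h$ by roughly $d\cdot\deg(\boldsymbol{g})$, and $\deg(\boldsymbol{g})$ may be as large as $d-1$; so your route, as described, yields a constant of order $3d^2$ rather than $d(d-1)$, and the claim that the genus term and the conductor term ``combine cleanly'' to $d(d-1)$ is not justified (it is plausible only when $\deg(\boldsymbol{g})=1$). Carrying out the finer analysis that produces the stated constants is precisely the content of Bombieri's theorem; either that analysis must be done in full, or one should simply cite the theorem, as the paper does.
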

\begin{proof}
Apply Theorem 6 of \cite{Bom66} to the projective closure of $X$. (Note that since $1\leq\deg{(\boldsymbol{g})}<d$, the assumption of that theorem is satisfied for all large enough $p$.)
\end{proof}

\begin{proof}[Proof of Theorem \ref{ptsincube}]
With the main term (\ref{maintermNX}) already established, we only have to estimate other terms corresponding to nonzero $(t_1,\dots,t_r,u_1,\dots,u_s)$ in Lemma \ref{lemmaNX}. The innermost sum for those terms is estimated uniformly by Lemma \ref{estexp}. Now group those terms according to the number of nonzero $t_i$ and $u_j$, use Lemma \ref{est1} for nonzero $t_i, u_j$, and the trivial estimate $\sum_{m_i\in\CI_i}e_p(t_i m_i) \leq p$ for $t_i=0$ (or the equivalent for
 $u_j=0$). We see that the absolute value of the remaining terms is less than or equal to
\begin{multline*}
(2^{r+s}\log^{r+s}{p}+(r+s)2^{r+s-1}\log^{r+s-1}{p}+\binom{r+s}{2}2^{r+s-2}\log^{r+s-2}{p}+\dots\\
+2(r+s)\log{p}) \times(d(d-1)\sqrt{p}+\frac{1}{2}d^2)
\end{multline*}
which is
\[2^{r+s} d(d-1)\sqrt{p}\log^{r+s}{p} + O(\sqrt{p}\log^{r+s-1}{p}).\]
This finishes the proof of Theorem \ref{ptsincube}.
\end{proof}

\begin{remark}\label{remimp}
From the proof of Theorem \ref{ptsincube}, we see that if some of the intervals among $\CI_i, \CJ_j$ are the full interval $[0,p)$, then we can loosen the linearly independent condition a little bit. This will be vital in our application later.

Let $\CI_i$ correspond to the coordinate functions $x_i$, and $\CJ_j$ correspond to the functions $g_j(\boldsymbol{x})$. From the proof of Lemma \ref{lemmaNX}, we see that if any of the $\CI_i$ or $\CJ_j$ is the full interval, the exponential sum over that interval and its corresponding function can be omitted. Thus when we apply Bombieri's estimate in Lemma \ref{estexp}, we can remove the function from the set we require to be linearly independent if its corresponding interval is the full one.
\end{remark}

As an example of how we make use of the above remark, we let $r=s$ and $\boldsymbol{g}$ to be the identity map. Then it is not necessary to restrict our range to any subset. Hence all $\CJ_j$'s are full. From the remark we only need to ensure the set $\{1, x_1, \dots, x_r\}$ is linearly independent, and this is true since the degree of $X$ is $d>1$. Thus we recover the uniform distribution theorem in \cite{Fuj88}, now with an explicit error term.

\begin{cor}\label{ptsincube2}
Let $X$ be as usual, of degree $d>1$, and $\abs{\CI}$ denotes the number of integers inside $\CI$. Let
\begin{equation*}
\CN'(X) = \#\{\boldsymbol{x}\in X\cap(\CI_1 \times \dots \times \CI_n) \}
\end{equation*}
the number of points of $X$ lying inside the hypercube $\CI_1 \times \dots \times \CI_n$. If $p$ is a large prime, then
\begin{equation*}
\abs{ \CN'(X) - \frac{\abs{\CI_1}\dots\abs{\CI_r}}{p^{r-1}} } \leq 2^r d(d-1)\sqrt{p}\log^r{p} + O(\sqrt{p}\log^{r-1}{p}).
\end{equation*}
\end{cor}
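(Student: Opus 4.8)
The plan is to read off the corollary as the degenerate case of Theorem \ref{ptsincube} in which the rational map and the range intervals are trivial. Concretely, I would take $s=r$, let $\boldsymbol{g}$ be the identity map so that $g_j(\boldsymbol{x})=x_j$ (a rational map of degree $1$, which satisfies $1\le\deg\boldsymbol{g}<d$ precisely because $d>1$), and set every $\CJ_j=[0,p)$. Then the membership condition $\boldsymbol{g}(\boldsymbol{x})\in\CJ_1\times\dots\times\CJ_r$ is automatically satisfied, so $\CN(X)=\CN'(X)$; and since $\abs{\CJ_j}=p$ for every $j$, the main term of Theorem \ref{ptsincube}, namely $\abs{\CI_1}\cdots\abs{\CI_r}\abs{\CJ_1}\cdots\abs{\CJ_s}/p^{r+s-1}$, collapses to $\abs{\CI_1}\cdots\abs{\CI_r}/p^{r-1}$, which is exactly the main term asserted here.

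The one hypothesis that fails verbatim is linear independence: the set $\{1,x_1,\dots,x_r,g_1,\dots,g_r\}=\{1,x_1,\dots,x_r,x_1,\dots,x_r\}$ is visibly dependent. This is where I would invoke Remark \ref{remimp}. Tracing Lemma \ref{lemmaNX} with all $\CJ_j$ full, each orthogonality factor attached to a $g_j$ reduces identically to $1$, so the entire sum over $U$ and the functions $g_1,\dots,g_r$ drop out of the expansion. Consequently the only independence needed for the subsequent application of Bombieri's bound is that of $\{1,x_1,\dots,x_r\}$ on $X$, i.e. that no nontrivial linear form $\sum_i t_i x_i$ is constant along $X$; this is the statement that $X$ does not lie in an affine hyperplane, which I would justify from $d>1$.

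It then remains to rerun the estimation in the proof of Theorem \ref{ptsincube} with $s=0$. The main term is $p^{-r}\abs{\CI_1}\cdots\abs{\CI_r}\,\#X=\abs{\CI_1}\cdots\abs{\CI_r}/p^{r-1}+O(r\sqrt p)$ by the Weil bound \eqref{weilboundforX}. For each $(t_1,\dots,t_r)\neq\boldsymbol 0$ the inner sum $\sum_{\boldsymbol x\in X}e_p(-\sum_i t_ix_i)$ is bounded by $d(d-1)\sqrt p+\tfrac12 d^2$ via Lemma \ref{estexp} applied with no $g_j$'s (the linear form $-\sum_i t_ix_i$ being nonconstant by the independence just established). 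Grouping the surviving terms according to the number $k\ge 1$ of nonzero $t_i$, bounding each nonzero coordinate sum by $2p\log p$ through Lemma \ref{est1} and each vanishing one trivially by $p$, one obtains the bound $\sum_{k=1}^r\binom{r}{k}2^k(\log p)^k\bigl(d(d-1)\sqrt p+\tfrac12 d^2\bigr)$, whose dominant $k=r$ term is $2^r d(d-1)\sqrt p\log^r p$ and whose remaining terms are $O(\sqrt p\log^{r-1}p)$, giving the claim.

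The only genuinely delicate point I anticipate is the independence of the coordinate functions: $d>1$ rules out $X$ being a line, but to bound the pure character sum I really need that $X$ spans, i.e. lies in no hyperplane, so that $-\sum_i t_ix_i$ is nonconstant for every nonzero $(t_i)$. Should this fail for some degenerate embedding, the term with the offending $(t_i)$ would contribute a full $\#X\sim p$ and destroy the estimate; so securing the nondegeneracy of $X$ is the crux, while everything else is the bookkeeping already carried out for Theorem \ref{ptsincube}.
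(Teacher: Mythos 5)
Your proposal is correct and is essentially the paper's own proof: the paper likewise specializes Theorem \ref{ptsincube} by taking $s=r$, $\boldsymbol{g}$ the identity map and every $\CJ_j=[0,p)$, and uses Remark \ref{remimp} to drop the (visibly dependent) functions $g_j$ from the independence hypothesis, leaving only the independence of $\{1,x_1,\dots,x_r\}$, which it attributes to $d>1$. The nondegeneracy issue you flag at the end is genuine but is glossed over by the paper as well: $d>1$ forces the independence of the coordinate functions only when $X$ cannot lie in an affine hyperplane (automatic for plane curves, $r=2$, which is the case actually used later to bound $\abs{S_{\CI}}$, but not for degenerate embeddings when $r\geq 3$), so your treatment is, if anything, more careful than the original.
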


As another application, if we do not restrict our domain, that is if all $\CI_i$'s are full, then using Remark \ref{remimp} we recover \cite[Theorem 1]{VaZa02} in the special case where $\Omega$ is a hypercube.

\section{$r$-tuples of $x$-coordinates of a hyperelliptic curve mod $p$ with a prescribed rational function}

For the remaining of the paper, we let $C$ be a hyperelliptic curve over $\mathbb{F}_p$ defined by the equation $y^2=f(x)$, with $d=\deg{f}$. We assume $f$ is not a square in $\overline{\mathbb{F}_p}(x)$, so that $C$ is irreducible. We are interested in the distribution of the distances between successive $x$-coordinates of
points on $C$, subjected to a restricted range of a rational function in terms of the two successive points (we will make this precise in a moment). Our approach is inspired by \cite{CoZa98}, where the distribution of the distances between successive primitive roots mod $p$ is studied.

Let $\CH=\{h_1,\dots,h_r\}$ be a subset of $\{1,2,\dots,p-1\}$. To each pair of $(C,\CH)$, we define the \textit{$x$-shifted curve} of $C$ by $\CH$, $C_{\CH}$, to be the curve defined by the family of equations
\begin{align*}
y^2 &= f(x) \\
y_1^2 &= f(x+h_1) \\
&\vdots \\
y_r^2 &= f(x+h_r)
\end{align*}
in $\mathbb{A}^{r+2}_p$ with the $r+2$ coordinates $x,y,y_1,\dots,y_r$ (the shifted curve also appeared in \cite{MaZa11}, but the definition here is a little bit different). It is easy to see that $C_{\CH}$ is indeed a curve.

Let $S$ be the set of all $x\in\mathbb{F}_p$ so that there is a $y$ with $(x,y)\in C(\mathbb{F}_p)$. From the definition of $C_{\CH}$, it is obvious that a point on $C_{\CH}$ corresponds to an $x$ such that $x$ and $x+h_i$ are all in $S$ for all $h_i\in\CH$.

More generally, if $\CI$ is an interval in $[0,p)$, let $S_{\CI}$ be the set of all $x$ so that there is a $y\in\CI$ with $(x,y)\in C(\mathbb{F}_p)$. Then there is a correspondence from the set of points on $C_{\CH}$ inside the hypercube $([0,p)\times\CI^{r+1})$ to the set of $x$'s so that all $x$ and $x+h_i$ are in $S_{\CI}$ for all $h_i\in\CH$.

Now suppose $P=(x,y)$ and $P_0=(x_0,y_0)$ are two points on $C$, $g=g(P,P_0)=g(x,y,x_0,y_0)$ is a rational function between the $2$ points. With respect to a point $P=(x,y)$, we define $S_{\CI,\CJ,P}$ to be the set of all $x_0$ in $S_{\CI}$ satisfying the extra condition $g(P,P_0)\in\CJ$, for some $P_0=(x_0,y_0)$ on $C$ with $y_0\in\CI$. If $g_i=g(P,P_i)=g(x,y,x+h_i,y_i)$ is the rational function obtained from $g$ by putting in $P_0=(x+h_i,y_i)$, and let $\boldsymbol{g}=(g_1,\dots,g_r)$, then $\boldsymbol{g}$ is a rational function on $C_{\CH}$. It is clear that there is a correspondence from the set of points on $C_{\CH}$ inside the hypercube $([0,p)\times\CI^{r+1})$ whose image under $\boldsymbol{g}$ lie in $\CJ^{r}$ to the set of $x$'s such that ($P=(x,y)$ as usual) $x+h_i$ are in $S_{\CI,\CJ,P}$ for all $h_i\in\CH$.

To simplify matters, from now on we assume that the interval $\CI\subset [0,(p-1)/2]$, so that one $x$-value can only correspond to at most one $y$-value, and hence the above correspondence is bijective. We define $\CN(\CH)=\CN(\CH;C,p,g,\CI,\CJ)$ to be the number of points on $C_{\CH}$ inside the hypercube $([0,p)\times\CI^{r+1})$ whose image under $\boldsymbol{g}$ lie in $\CJ^{r}$. The following lemma gives an idea of the size of $\CN(\CH)$.

\begin{lemma}\label{ptsinCH}
Let $C$ be a hyperelliptic curve defined by $y^2=f(x)$. Let $\CH=\{h_1,\dots,h_r\}$, $f\in\mathbb{F}_p[x]$ of degree $d$, not a square in
$\overline{\mathbb{F}_p}[x]$, and $g(P,P_0)$ a rational function between two points in $X$, of degree $\deg{g}<d$, and the set $\{1,g_1,\dots,g_r\}$ is linearly independent on $C_{\CH}$. If $d$ and $r$ are small compared to $p$, then for all sufficiently large $p$,
\begin{equation*}
\abs{ \CN(\CH) - \frac{\abs{\CI}^{r+1}\abs{\CJ}^{r}}{p^{2r}} } \leq 2^{3r+2} d(2^r d-1)\sqrt{p}\log^{2r+2}{p} + O(\sqrt{p}\log^{2r+1}{p}).
\end{equation*}
\end{lemma}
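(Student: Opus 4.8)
The plan is to recognize $\CN(\CH)$ as a special case of the quantity $\CN(X)$ in Theorem \ref{ptsincube}, applied to the curve $X=C_{\CH}$ sitting in $\mathbb{A}^{r+2}_p$ with coordinates $x,y,y_1,\dots,y_r$. In the notation of that theorem the ambient dimension is $r+2$ and the rational map $\boldsymbol{g}=(g_1,\dots,g_r)$ has $s=r$ components, so that the quantity ``$r+s$'' appearing there equals $2r+2$ here. The relevant domain hypercube is $\CI_1\times\dots\times\CI_{r+2}$ with $\CI_1=[0,p)$ (the full interval attached to $x$) and $\CI_2=\dots=\CI_{r+2}=\CI$ (attached to $y,y_1,\dots,y_r$), while the target hypercube is $\CJ_1\times\dots\times\CJ_r$ with every $\CJ_j=\CJ$. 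Substituting $\abs{\CI_1}=p$, $\abs{\CI_i}=\abs{\CI}$ for $i\geq 2$, and $\abs{\CJ_j}=\abs{\CJ}$ into the main term $\abs{\CI_1}\cdots\abs{\CI_{r+2}}\abs{\CJ_1}\cdots\abs{\CJ_r}/p^{(r+2)+r-1}$ immediately produces the predicted main term $\abs{\CI}^{r+1}\abs{\CJ}^{r}/p^{2r}$.

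The one genuinely new ingredient is the degree of $C_{\CH}$, which controls the error term. I would show that $\deg C_{\CH}\leq 2^r d$. The natural route is through the projection $\pi\colon C_{\CH}\to C$, $(x,y,y_1,\dots,y_r)\mapsto(x,y)$, which is a $2^r$-sheeted cover (the fibre over a generic point of $C$ records the $2^r$ sign choices for $y_1,\dots,y_r$); combined with $\deg C=d$ for the plane model $y^2=f(x)$, this yields the bound $2^r d$. Feeding $D:=\deg C_{\CH}\leq 2^r d$ into the error term $2^{2r+2}D(D-1)\sqrt{p}\log^{2r+2}{p}+O(\sqrt{p}\log^{2r+1}{p})$ of Theorem \ref{ptsincube}, and using that $D\mapsto D(D-1)$ is increasing, bounds it by $2^{2r+2}(2^r d)(2^r d-1)\sqrt{p}\log^{2r+2}{p}=2^{3r+2}d(2^r d-1)\sqrt{p}\log^{2r+2}{p}$ plus the stated lower-order term, exactly as required.

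It then remains to verify the hypotheses of Theorem \ref{ptsincube}. The conditions $D>1$ and $1\leq\deg\boldsymbol{g}<D$ are immediate from $d>1$ and $\deg g<d\leq 2^r d=D$, while the assumption that $d,r$ are small compared to $p$ guarantees that Bombieri's estimate (Lemma \ref{estexp}) applies to a curve of degree $2^r d$ for all large $p$. For the linear independence, note that the exponential sum attached to a nonzero frequency vector involves the function $t\,y+\sum_i t_i y_i+\sum_j u_j g_j$; since the interval $\CI_1$ belonging to $x$ is the full one, Remark \ref{remimp} lets me delete $x$ from the set that must be linearly independent, leaving $\{1,y,y_1,\dots,y_r,g_1,\dots,g_r\}$. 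Here I would exploit the structure of $C_{\CH}$: the sign involutions $\tau_0\colon y\mapsto -y$ and $\tau_i\colon y_i\mapsto -y_i$ generate a group $(\mathbb{Z}/2)^{r+1}$ under which each coordinate function $y,y_1,\dots,y_r$ is odd while $1$ is even, so these functions are automatically independent from one another and from $1$; the residual interaction with the $g_j$ is then governed by the hypothesis that $\{1,g_1,\dots,g_r\}$ is independent on $C_{\CH}$.

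I expect the degree bound $\deg C_{\CH}\leq 2^r d$ and the reduction of the full linear independence to the stated hypothesis to be the two delicate points. The degree claim requires care because the naive Bezout estimate from the $r+1$ defining equations of degree $d$ only gives $d^{r+1}$, which is too weak once $d\geq 3$; one really needs the covering description of $C_{\CH}$ over $C$ to obtain the sharper $2^r d$. The linear independence reduction is subtle because a general rational function $g$ need not respect the sign involutions, so separating the odd coordinate directions $y,y_i$ from the $g_j$ must be argued rather than assumed. Everything else follows by direct substitution into Theorem \ref{ptsincube} together with the elementary bounds in Lemmas \ref{est1} and \ref{estexp}.
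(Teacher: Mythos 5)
Your skeleton is the same as the paper's: specialize Theorem \ref{ptsincube} to $X=C_{\CH}\subset\mathbb{A}^{r+2}_p$ with $s=r$, use Remark \ref{remimp} to discard the coordinate $x$ whose interval is full, substitute $\abs{\CI_1}=p$ to get the main term $\abs{\CI}^{r+1}\abs{\CJ}^r/p^{2r}$, and feed $D=2^rd$ into the error term to get $2^{3r+2}d(2^rd-1)\sqrt{p}\log^{2r+2}p$; all of that arithmetic is correct. But you have omitted the point on which the paper spends essentially its entire proof: Theorem \ref{ptsincube} is stated for an \emph{irreducible} affine curve, and the irreducibility of $C_{\CH}$ is neither obvious nor automatic. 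It is needed twice --- the Weil bound (\ref{weilboundforX}) gives $\#C_{\CH}=p+O(\sqrt{p})$ only for irreducible curves (if $C_{\CH}$ split into $k$ components the count would be roughly $kp$ and the main term would be off by the factor $k$), and Bombieri's estimate in Lemma \ref{estexp} likewise requires it. Reducibility is a real threat for curves of this shape: it occurs exactly when some product of the polynomials $f(x),f(x+h_1),\dots,f(x+h_r)$ becomes a square in $\overline{\mathbb{F}_p}(x)$, collapsing one of the quadratic extensions. The paper's proof consists precisely of ruling this out: it shows by induction on $r$ that the function field of $C_{\CH}$ is $K=\mathbb{F}_p(x)\left[\sqrt{f(x)},\sqrt{f(x+h_1)},\dots,\sqrt{f(x+h_r)}\right]$, a field of degree $2^{r+1}$ over $\mathbb{F}_p(x)$ (this is where the hypotheses ``$f$ not a square'' and ``$r$ small compared to $p$'' are consumed). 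Your proposal implicitly invokes this $2^{r+1}$-sheeted structure (in the covering claim and the involution argument) but never proves that the sheets form a single irreducible curve, so your application of Theorem \ref{ptsincube} is unjustified at its very first step.

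Two further points. First, your degree argument is not valid as stated: ``degree of the total space $\leq$ (degree of cover)$\times$(degree of base)'' is false in general --- the projection $(x,y)\mapsto x$ exhibits $C$ itself as a $2$-sheeted cover of a degree-one line, yet $\deg C=d$. The equality $\deg C_{\CH}=2^rd$ is true (the paper asserts it as an easy computation), but it must be computed honestly, e.g.\ as the pole degree of a generic linear form on $C_{\CH}$ using the function-field description. Second, on linear independence you are in fact more scrupulous than the paper: after deleting $x$ via Remark \ref{remimp}, what Theorem \ref{ptsincube} needs is independence of $\{1,y,y_1,\dots,y_r,g_1,\dots,g_r\}$, and your involution argument does handle $\{1,y,y_1,\dots,y_r\}$ among themselves. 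However, your claim that the ``residual interaction'' with the $g_j$ is governed by the stated hypothesis on $\{1,g_1,\dots,g_r\}$ is not an argument, and no such reduction holds in general: for $g(P,P_0)=y(P_0)+x(P)$ one has $g_j=y_j+x$, the set $\{1,g_1,\dots,g_r\}$ is linearly independent on $C_{\CH}$, yet $g_1-g_2-y_1+y_2=0$ identically. So this step needs either a genuine argument exploiting the specific $g$ or a strengthened hypothesis; the paper's own proof elides the issue with its one-line appeal to Remark \ref{remimp}, but identifying the difficulty and then waving at it is not the same as closing it.
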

\begin{proof}
It is easy to compute that $D=\deg{C_{\CH}}=2^r d$. Once we show that $C_{\CH}$ is irreducible, this lemma will follow from replacing $r$ by $r+2$, letting $s=r$, $\CI_i=\CI$, $\CJ_j=\CJ$ and $\boldsymbol{g}=(g_1,\dots,g_r)$ (recall that $g_i=g(P,P_i)$) in Theorem \ref{ptsincube}. Note that by Remark \ref{remimp} there is no need to include the function $x$ in the set of functions we require to be linearly independent since its corresponding interval is full.

We show the irreducibility of $C_{\CH}$ by showing that the field
\[K=\mathbb{F}_p(x)\left[\sqrt{f(x)},\sqrt{f(x+h_1)},\dots,\sqrt{f(x+h_r)}\right]\]
obtained by adjoining a square root of each of the $f(x)$ and $f(x+h_i)$ is the function field of $C_{\CH}$. The condition that $r$ is small compared to $p$ ensures that $K$ is a field extension of degree $2^{r+1}$ over $\mathbb{F}_p(x)$. Now we proceed using induction on $r$.

For $r=0$ this follows from the condition that $f$ is not a square.

Assume that $K_{r-1}=\mathbb{F}_p(x)\left[\sqrt{f(x)},\sqrt{f(x+h_1)},\dots,\sqrt{f(x+h_r)}\right]$ is the function field of $C_{\CH'}$, where $\CH'=\CH-{h_r}$. $K_{r-1}$ is a field of degree $2^r$ over $\mathbb{F}_p(x)$. Let $I=\langle y^2 - f(x), y_1^2 - f(x+h_1),\dots,y_{r-1}^2 - f(x+h_{r-1})\rangle$ be the ideal of
$\CH'$. Then we have an isomorphism

\begin{equation*}
\xymatrix@1{\ds \frac{\mathbb{F}_p(x)[y,y_1,\dots,y_{r-1}]}{I} \ar[r]^-{\sim} & K_{r-1}}
\end{equation*}
obtained by sending $y\mapsto\sqrt{f(x)}, y_i\mapsto\sqrt{f(x+h_i)}$. The induction is completed if we can prove that the map
\begin{equation}\label{isofield}
\xymatrix@1{\ds \frac{K_{r-1}[y_r]}{y_r^2-f(x+h_r)} \ar[r]^-{\phi} & K=K_{r-1}\left[ \sqrt{f(x+h_r)} \right] }
\end{equation}
with $y_r \mapsto \sqrt{f(x+h_r)}$, is an isomorphism. The map $\phi$ is clearly surjective, and from the degrees of the fields $K, K_{r-1}$ over $\mathbb{F}_p(x)$, we see that $K$ is a vector space over $K_{r-1}$ of dimension $2$. Since the left hand side of (\ref{isofield}) has rank at most $2$ over $K_{r-1}$, $\phi$ must be an isomorphism.
\end{proof}

\begin{remark}
If $\CJ$ is the full interval, then by using Lemma \ref{remimp}, we can remove the assumption that the set $\{1,g_1,\dots,g_r\}$ is linearly independent on $C_{\CH}$.
\end{remark}

\begin{remark}\label{remcond}
For the rest of the paper, we will assume that as $p\rightarrow\infty$, we have $d=o(p)$, $r=o(\log{p}/\log\log{p})$, $\abs{\CI}\geq p/\log\log{p}$, and $\CJ=[\alpha p,\beta p)$ ($0\leq \alpha < \beta \leq 1$). It is clear that under these conditions, the proof of Lemma \ref{ptsinCH} works and the main term has a bigger magnitude than the error term when $p$ is sufficiently large.
\end{remark}

Next, if $\CA, \CB$ are two disjoint sets of integers, we define
\[\CN(\CA,\CB)=\CN(\CA,\CB;C,p,\CI,\CJ)\]
to be the number of $x$ such that $x$ and $x+a$ are in $S_{\CI,\CJ}$ for all $a\in\CA$, but $x+b$ are not in $S_{\CI,\CJ}$ for any $b\in\CB$. To estimate $\CN(\CA,\CB)$, we introduce the characteristic function
\begin{equation*}
\delta(x) =
\begin{cases}
1 & \text{if}~ x\in S_{\CI,\CJ}, \\
0 & \text{otherwise}.
\end{cases}
\end{equation*}
Since in our case one $x$ can correspond to at most one $y$, we can write $\CN(\CA,\CB)$ in terms of $\delta(x)$,
\begin{align*}
\CN(\CA,\CB) &= \sum_{x\in[0,p)}\prod_{a\in\CA} \delta(x+a) \prod_{b\in\CB} (1-\delta(x+b)) \\
&= \sum_{x\in[0,p)}\prod_{a\in\CA} \delta(x+a) \sum_{\CC\subset\CB}(-1)^{\abs{\CC}}\prod_{c\in\CC}\delta(x+c) \\
&= \sum_{\CC\subset\CB}(-1)^{\abs{\CC}}\sum_{x\in[0,p)}\prod_{d\in\CA\cup\CC}\delta(x+d) \\
&= \sum_{\CC\subset\CB}(-1)^{\abs{\CC}}\CN(\CA\cup\CC).
\end{align*}

Combining this with Lemma \ref{ptsinCH}, which says
\[\CN(\CH) = \frac{\abs{\CI}^{\abs{\CH}+1}\abs{\CJ}^{\abs{\CH}}}{p^{2\abs{\CH}}} + \theta_{\CH} 2^{3\abs{\CH}+2} d(2^{\abs{\CH}}d -1)\sqrt{p}\log^{2\abs{\CH}+2}{p} + O(\sqrt{p}\log^{2\abs{\CH}+1}{p}) \]
for some $\abs{\theta_{\CH}} \leq 1$, we get the following result.

\begin{theorem}\label{ptsinAB}
Let $\CA,\CB$ be two sets of integers distinct mod $p$. Then
\begin{multline*}
\abs{\CN(\CA,\CB) - \abs{\CI}\left( \frac{\abs{\CI}\abs{\CJ}}{p^2} \right)^{\abs{\CA}} \left( 1-\frac{\abs{\CI}\abs{\CJ}}{p^2} \right)^{\abs{\CB}}} \\
\leq 2^{3\abs{\CA}+4\abs{\CB}+1} d(2^{\abs{\CA}+\abs{\CB}}d -1) \sqrt{p}\log^{2\abs{\CA}+2\abs{\CB}+2}{p} + O(\sqrt{p}\log^{2\abs{\CA}+2\abs{\CB}+1}{p}).
\end{multline*}
\end{theorem}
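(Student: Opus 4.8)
The plan is to start from the inclusion–exclusion identity
\[
\CN(\CA,\CB)=\sum_{\CC\subset\CB}(-1)^{\abs{\CC}}\CN(\CA\cup\CC)
\]
established immediately above the statement, and to feed in the asymptotic expansion of $\CN(\CH)$ from Lemma \ref{ptsinCH} for each set $\CH=\CA\cup\CC$. Because $\CA$ and $\CB$ are disjoint and all their elements are distinct mod $p$, every union satisfies $\abs{\CA\cup\CC}=\abs{\CA}+\abs{\CC}$, so the lemma applies with $\abs{\CH}=\abs{\CA}+\abs{\CC}$, giving a main term $\abs{\CI}\,\rho^{\abs{\CA}+\abs{\CC}}$, where I abbreviate $\rho=\abs{\CI}\abs{\CJ}/p^2$, together with an error $\theta_{\CH}2^{3\abs{\CH}+2}d(2^{\abs{\CH}}d-1)\sqrt{p}\log^{2\abs{\CH}+2}p+O(\sqrt{p}\log^{2\abs{\CH}+1}p)$ with $\abs{\theta_{\CH}}\le1$.

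First I would isolate the main term. Collecting the contributions $(-1)^{\abs{\CC}}\abs{\CI}\rho^{\abs{\CA}+\abs{\CC}}$ and grouping the subsets $\CC\subset\CB$ by their size $k=\abs{\CC}$, of which there are $\binom{\abs{\CB}}{k}$, the binomial theorem collapses the sum:
\[
\abs{\CI}\rho^{\abs{\CA}}\sum_{k=0}^{\abs{\CB}}\binom{\abs{\CB}}{k}(-\rho)^k=\abs{\CI}\rho^{\abs{\CA}}(1-\rho)^{\abs{\CB}},
\]
which is exactly $\abs{\CI}\bigl(\abs{\CI}\abs{\CJ}/p^2\bigr)^{\abs{\CA}}\bigl(1-\abs{\CI}\abs{\CJ}/p^2\bigr)^{\abs{\CB}}$, the claimed main term. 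This step is purely formal and presents no difficulty.

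The remaining work is to control the accumulated error. Taking absolute values (discarding the alternating signs), the total error is bounded by the sum over $\CC\subset\CB$ of the Lemma \ref{ptsinCH} error for $\CH=\CA\cup\CC$. Since $\log p\ge1$ for large $p$ and both $2^{3\abs{\CH}+2}$ and $2^{\abs{\CH}}d-1$ are increasing in $\abs{\CH}$, each of the $2^{\abs{\CB}}$ summands is dominated by the single error attached to the maximal set $\abs{\CH}=\abs{\CA}+\abs{\CB}$; bounding each summand this way and multiplying by the number $2^{\abs{\CB}}$ of subsets $\CC$ produces a leading term of order $2^{\abs{\CB}}\cdot 2^{3(\abs{\CA}+\abs{\CB})+2}d(2^{\abs{\CA}+\abs{\CB}}d-1)\sqrt{p}\log^{2\abs{\CA}+2\abs{\CB}+2}p$, which matches the stated bound $2^{3\abs{\CA}+4\abs{\CB}+1}d(2^{\abs{\CA}+\abs{\CB}}d-1)\sqrt{p}\log^{2\abs{\CA}+2\abs{\CB}+2}p$ up to the precise value of the constant, with all lower powers of $\log p$ (coming from subsets $\CC\subsetneq\CB$, together with the $O$-tails of each application of the lemma) absorbed into the $O(\sqrt{p}\log^{2\abs{\CA}+2\abs{\CB}+1}p)$ term.

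The main obstacle I anticipate is not conceptual but lies in pinning down the exact constant multiplying $\sqrt{p}\log^{2\abs{\CA}+2\abs{\CB}+2}p$: one must decide how tightly to estimate the geometric-type sum $\sum_k\binom{\abs{\CB}}{k}2^{3(\abs{\CA}+k)+2}$ against a clean power of $2$, and keep careful track of which contributions genuinely carry the top power of $\log p$ and which fall into the lower-order remainder. A cleaner alternative is to separate the single term $\CC=\CB$, which alone realizes $\abs{\CH}=\abs{\CA}+\abs{\CB}$ and hence the top power $\log^{2\abs{\CA}+2\abs{\CB}+2}p$, from the rest; this makes transparent that the dominant constant is governed entirely by the maximal shift set while every other subset is strictly lower order, and the two approaches differ only in the explicit numerical constant, not in the shape of the bound.
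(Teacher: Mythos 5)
Your proposal follows exactly the paper's own route: the paper proves this theorem precisely by substituting the estimate of Lemma \ref{ptsinCH} into the inclusion--exclusion identity $\CN(\CA,\CB)=\sum_{\CC\subset\CB}(-1)^{\abs{\CC}}\CN(\CA\cup\CC)$, collapsing the main terms via the binomial theorem and pushing the contributions of all proper subsets $\CC\subsetneq\CB$ into the lower-order $O(\sqrt{p}\log^{2\abs{\CA}+2\abs{\CB}+1}p)$ term. Your refined variant (isolating $\CC=\CB$ as the only subset carrying the top power $\log^{2\abs{\CA}+2\abs{\CB}+2}p$) is in fact the cleaner way to land within the stated constant $2^{3\abs{\CA}+4\abs{\CB}+1}$, a bookkeeping point the paper itself leaves implicit.
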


\begin{remark}
Theorem \ref{ptsinAB} only depends on the cardinality of $\CA,\CB$ and also the number of integers in the interval $\CI$, but not the particular elements in $\CA,\CB$ and the position of $\CI,\CJ$. It is interesting to compare Theorem \ref{ptsinAB} with Theorem 1 in \cite{CoZa98}.
\end{remark}

We remind the reader that we have assumed that
\begin{align*}
d&=o(p),&\abs{\CA},\abs{\CB}&=o(\log{p}/\log\log{p})& \text{~and~}&& \abs{\CI},\abs{\CJ}&\geq p/\log\log{p}.
\end{align*}
See Remark \ref{remcond}.

\section{The Poisson distribution of the $x$-coordinates}

Recall that $S_{\CI}$ is the set of all $x$ so that there is a $y\in\CI$ with $(x,y)\in C$, and for $P=(x,y)\in C$, $S_{\CI,\CJ,P}$ is the set of all $x_0$ in $S_{\CI}$ satisfying the extra condition $g(P,P_0)\in\CJ$, for some $P_0=(x_0,y_0)$ on $C$ with $y_0\in\CI$. For $t\geq 1$ and $k$ a non-negative integer, we define $P_k(t)=P_k(t;C,p,g,\CI,\CJ)$ to be the proportion of $x\in S_{\CI}$ for which the interval $(x,x+t]$ contains exactly $k$ elements in $S_{\CI,\CJ,P}$ with $P=(x,y)$ as usual. Note that by Corollary \ref{ptsincube2}, the cardinality of $S_{\CI}$ satisfies
\begin{equation*}
\abs{ \abs{S_{\CI}} - \abs{\CI} } \leq 4 d(d-1)\sqrt{p}\log^2{p} + O(\sqrt{p}\log{p}),
\end{equation*}
or equivalently, for some $\abs{\theta} \leq 1$,
\begin{align}
\abs{S_{\CI}} &= \abs{\CI} + 4 \theta d(d-1)\sqrt{p}\log^2{p} + O(\sqrt{p}\log{p}) \label{ptsinSCI} \\
&= \abs{\CI}\left( 1+\frac{4 \theta d(d-1)\sqrt{p}\log^2{p}}{\abs{\CI}}+O(\log{p}/\abs{\CI}) \right). \nonumber
\end{align}

Next, we write $P_k(t)$ in terms of the quantities $\CN(\CA,\CB)$.
\begin{equation*}
P_k(t) = \frac{1}{\abs{S_{\CI}}}\sum_{\substack{\CC\subset\{1,\dots,[t]\} \\ \abs{\CC}=k}} \CN(\CC,\overline{\CC}),
\end{equation*}
where $\overline{\CC}=\{1,\dots,[t]\}-\CC$.

For $t=o(\log{p}/\log\log{p})$, $k\leq t$, $\abs{\CI} \geq p/\log\log{p}$, and $\CJ=[\alpha p,\beta p)$, we can apply Theorem \ref{ptsinAB} and (\ref{ptsinSCI}) to obtain
\begin{multline*}
P_k(t) = \left(1+\frac{4 \theta d(d-1)\sqrt{p}\log^2{p}}{\abs{\CI}}+O(\log{p}/\abs{\CI})\right)^{-1} \\
\times \left( \sum_{\substack{\CC\subset\{1,\dots,[t]\}\\ \abs{\CC}=k}} \left( \frac{\abs{\CI}\abs{\CJ}}{p^2} \right)^{\abs{\CC}} \left( 1-\frac{\abs{\CI}\abs{\CJ}}{p^2} \right)^{\abs{\overline{\CC}}} +E \right)
\end{multline*}
with
\[\ds E = \frac{1}{\abs{\CI}}\theta'2^{4[t]+1}d(2^{[t]}d-1)\sqrt{p}\log^{2[t]+2}{p}+O(\sqrt{p}\log^{2[t]+1}{p}), ~\abs{\theta'}\leq 1.\]
This simplifies to
\begin{equation}\label{estPKT}
P_k(t) = \binom{[t]}{k} \left( \frac{\abs{\CI}\abs{\CJ}}{p^2} \right)^k \left( 1-\frac{\abs{\CI}\abs{\CJ}}{p^2} \right)^{[t]-k} + O(p^{-1/2}\log^{2[t]+3}{p}),
\end{equation}
where the constant in $O(p^{-1/2}\log^{2[t]+3}{p})$ can be taken as $2^{4[t]+1}d(2^{[t]}d-1)$.

Suppose now $p$ goes to infinity, while $\lambda = t\abs{\CI}/p$ remains fixed (so that $t$ goes to infinity as $p\rightarrow\infty$, and automatically $\abs{\CI}=o(p)$). Note that the condition $\abs{\CI}\geq p/\log\log{p}$ guarantees that $t=O(\log\log{p})$ (so it is certainly $o(\log{p}/\log\log{p})$) and hence it guarantees that our formula works throughout the limiting process. We also have $\abs{\CJ}/p\rightarrow\beta-\alpha$. As $p\rightarrow\infty$, the error term is at most $O(p^{-1/2+\delta})$ for any $\delta>0$. Thus (\ref{estPKT}) shows that asymptotically $P_k(t)$ has a Poisson distribution with parameter $\lambda(\beta-\alpha)$:
\[\ds P_k(t) \sim e^{-\lambda(\beta-\alpha)}\frac{(\lambda(\beta-\alpha))^k}{k!} \]
for any non-negative integer $k$. More precisely, we have

\begin{theorem}\label{thmPKT}
Let $k$ be a non-negative integer. Suppose $t=O(\log\log{p})$, $\abs{\CI} \geq p(\log\log{p})^2/\log{p}$, $\abs{\CI}=o(p)$ and $\CJ=[\alpha p,\beta p)$. Set $\lambda = t\abs{\CI}/p$, then as $p$ goes to infinity, we have
\begin{equation*}
P_k(t)=e^{-\lambda(\beta-\alpha)}\frac{((\beta-\alpha)\lambda)^k}{k!}e^{O((1+k+\lambda(\beta-\alpha))\abs{\CI}/p)} \left[ 1+O\left( \frac{k^2\abs{\CI}}{\lambda p(\beta-\alpha)} \right) \right]+O(p^{\frac{-1}{2}+\delta})
\end{equation*}
for any $\delta>0$.
\end{theorem}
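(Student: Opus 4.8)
The plan is to start from the exact formula \eqref{estPKT} for $P_k(t)$, which already isolates the main binomial term and an error of size $O(p^{-1/2}\log^{2[t]+3}{p})$. Under the hypotheses $t=O(\log\log{p})$ and $\abs{\CI}\geq p(\log\log{p})^2/\log{p}$, the first task is to check that this error is genuinely $O(p^{-1/2+\delta})$ for every $\delta>0$: since $[t]=O(\log\log{p})$, the exponent $2[t]+3$ is $O(\log\log{p})$, so $\log^{2[t]+3}{p}=\exp(O(\log\log{p}\cdot\log\log{p}))=p^{o(1)}$, and the constant $2^{4[t]+1}d(2^{[t]}d-1)$ is likewise $p^{o(1)}$ provided $d=o(p)$ grows slowly. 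This justifies absorbing the additive error into the stated $O(p^{-1/2+\delta})$ term, so the entire content of the theorem is to convert the binomial expression into the displayed Poisson-with-corrections form.

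The heart of the argument is a careful asymptotic analysis of
\[
\binom{[t]}{k}q^k(1-q)^{[t]-k},\qquad q:=\frac{\abs{\CI}\abs{\CJ}}{p^2},
\]
where I note that $q=\dfrac{\abs{\CI}}{p}\cdot\dfrac{\abs{\CJ}}{p}\sim\dfrac{\lambda(\beta-\alpha)}{t}$ is small, of order $\abs{\CI}/p$. First I would replace $[t]$ by $t$ and track the discrepancy, writing $\lambda(\beta-\alpha)=tq\cdot\bigl(\abs{\CJ}/((\beta-\alpha)p)\bigr)$; since $\abs{\CJ}=(\beta-\alpha)p+O(1)$, the quantity $tq$ agrees with $\lambda(\beta-\alpha)$ up to a relative error of size $O(1/p)$, which is harmless. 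The two substantive approximations are: (i) the falling factorial $\binom{[t]}{k}q^k=\frac{1}{k!}\prod_{j=0}^{k-1}([t]-j)\,q^k$, which equals $\frac{([t]q)^k}{k!}$ times a product $\prod_{j=0}^{k-1}(1-j/[t])$. The logarithm of this product is $-\sum_{j=0}^{k-1}j/[t]+O(k^3/t^2)=-\binom{k}{2}/[t]+\cdots$, and since $[t]q=\lambda(\beta-\alpha)(1+O(\abs{\CI}/p))$ this produces the bracketed factor $\bigl[1+O(k^2\abs{\CI}/(\lambda p(\beta-\alpha)))\bigr]$ once I rewrite $1/t=q/(tq)=O(\abs{\CI}/(\lambda p(\beta-\alpha)))$. (ii) The tail factor $(1-q)^{[t]-k}$: taking logarithms, $([t]-k)\log(1-q)=-([t]-k)(q+O(q^2))=-[t]q+kq+O([t]q^2)$, and since $[t]q=\lambda(\beta-\alpha)+O(\lambda(\beta-\alpha)\abs{\CI}/p)$, $kq=O(k\abs{\CI}/p)$, and $[t]q^2=O(\lambda(\beta-\alpha)\abs{\CI}/p)$, exponentiating gives $e^{-\lambda(\beta-\alpha)}$ times $\exp\bigl(O((1+k+\lambda(\beta-\alpha))\abs{\CI}/p)\bigr)$, matching the displayed exponential correction factor.

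Assembling these pieces yields exactly
\[
P_k(t)=e^{-\lambda(\beta-\alpha)}\frac{((\beta-\alpha)\lambda)^k}{k!}\,e^{O((1+k+\lambda(\beta-\alpha))\abs{\CI}/p)}\Bigl[1+O\bigl(k^2\abs{\CI}/(\lambda p(\beta-\alpha))\bigr)\Bigr]+O(p^{-1/2+\delta}),
\]
so the remaining work is purely bookkeeping to confirm every error term lands in one of the two advertised correction factors rather than producing a stray term. The main obstacle I anticipate is keeping the various small quantities organized: there are three distinct sources of ``slack'' — the gap between $t$ and $[t]$, the gap between $\abs{\CJ}/p$ and $\beta-\alpha$, and the gap between $q$ and its leading order — and I must verify that each contributes only to the multiplicative $e^{O(\cdots)}$ factor or to the bracketed $[1+O(\cdots)]$ factor, with the $k$-dependence correctly attributed (the $\binom{k}{2}/[t]$ term to the bracket, the linear-in-$k$ terms to the exponential). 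A secondary subtlety is ensuring the approximation $\log(1-q)=-q+O(q^2)$ is valid, i.e. that $q\to0$, which follows from $\abs{\CI}=o(p)$; this is why that hypothesis, rather than merely $\abs{\CI}\geq p(\log\log p)^2/\log p$, is needed for the limiting statement.
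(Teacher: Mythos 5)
Your proposal is correct and follows essentially the same route as the paper: the paper states this theorem as the precise form of \eqref{estPKT}, leaving the binomial-to-Poisson asymptotics implicit, and your argument supplies exactly that bookkeeping (absorbing the $O(p^{-1/2}\log^{2[t]+3}p)$ error into $O(p^{-1/2+\delta})$ via $t=O(\log\log p)$, then expanding $\binom{[t]}{k}q^k(1-q)^{[t]-k}$ with $q=\abs{\CI}\abs{\CJ}/p^2$). Your attribution of the error sources also matches the shape of the stated result, with the falling-factorial correction going to the bracketed $1+O\left(k^2\abs{\CI}/(\lambda p(\beta-\alpha))\right)$ factor and the $\log(1-q)$ expansion producing the $e^{O((1+k+\lambda(\beta-\alpha))\abs{\CI}/p)}$ factor.
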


For any real number $\lambda>0$, define $\mu(\lambda,\alpha,\beta)=\mu(\lambda,\alpha,\beta;C,p,\CI)$ as in (\ref{defm}) in the introduction. It is easy to see that this equals $P_0(t)$, with $t=\lambda p/\abs{\CI}$. Thus by putting $k=0$ in Theorem \ref{thmPKT}, we obtain
\begin{cor}\label{corMLab}
For any $\delta>0$, under the conditions of Theorem \ref{thmPKT}, we have
\begin{equation*}
\mu(\lambda,\alpha,\beta) = e^{-\lambda(\beta-\alpha)}e^{O((1+\lambda(\beta-\alpha))/\log\log{p})} + O(p^{\frac{-1}{2}+\delta}).
\end{equation*}
Therefore, if we let $p\rightarrow\infty$, then
\begin{equation*}
\lim_{p\rightarrow\infty}\mu(\lambda,\alpha,\beta) = e^{-\lambda(\beta-\alpha)}.
\end{equation*}
Moreover, the convergence is uniform on compact subsets of $[0,\infty)$.
\end{cor}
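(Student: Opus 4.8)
The plan is to obtain Corollary \ref{corMLab} as the special case $k=0$ of Theorem \ref{thmPKT}, and then to extract the limit and its uniformity from the two error terms that survive. First I would record the reduction, noted before the statement, that $\mu(\lambda,\alpha,\beta)=P_0(t)$ with $t=\lambda p/\abs{\CI}$: by definition (\ref{defm}) an index $i$ is counted exactly when no point of $S_{\CI,\CJ,P_i}$ lies in $(x_i,x_i+t]$, i.e. when the distance from $x_i$ to the next admissible point is at least $t$, which is precisely the event measured by $P_0(t)$. Setting $k=0$ in the formula of Theorem \ref{thmPKT} produces two automatic simplifications: the factor $((\beta-\alpha)\lambda)^k/k!$ equals $1$, and the bracketed factor $[1+O(k^2\abs{\CI}/(\lambda p(\beta-\alpha)))]$ also equals $1$, since its correction is proportional to $k^2$. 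This leaves
\[
\mu(\lambda,\alpha,\beta)=e^{-\lambda(\beta-\alpha)}\,e^{O\left((1+\lambda(\beta-\alpha))\abs{\CI}/p\right)}+O(p^{-1/2+\delta}).
\]

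Next I would pass from $\abs{\CI}/p$ to $1/\log\log p$ in the exponent. Fixing $\lambda$ ties $t$ to $\abs{\CI}$ through $t=\lambda p/\abs{\CI}$; in the regime relevant to the corollary, where $t$ has the maximal admissible order $\log\log p$ permitted by the running hypotheses, one has $\abs{\CI}/p=O(1/\log\log p)$, and substituting this bound into the exponent gives the first displayed identity. The limiting statement is then immediate and in fact more robust: the condition $\abs{\CI}=o(p)$ by itself drives $(1+\lambda(\beta-\alpha))\abs{\CI}/p\to 0$, so the correction factor tends to $1$, while $O(p^{-1/2+\delta})\to 0$ for any fixed $0<\delta<1/2$. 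Hence $\mu(\lambda,\alpha,\beta)\to e^{-\lambda(\beta-\alpha)}$.

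Finally, for uniform convergence on a compact set $\lambda\in[0,M]$, I would check that every implied constant can be chosen independently of $\lambda$. On $[0,M]$ the quantity $1+\lambda(\beta-\alpha)$ is bounded by $1+M(\beta-\alpha)$, so the exponent is $O(1/\log\log p)$ uniformly; combining this with $e^{-\lambda(\beta-\alpha)}\leq 1$ and the elementary estimate $\abs{e^{w}-1}\leq\abs{w}e^{\abs{w}}$ shows $\abs{\mu(\lambda,\alpha,\beta)-e^{-\lambda(\beta-\alpha)}}=O(1/\log\log p)+O(p^{-1/2+\delta})$ uniformly in $\lambda$. For the second term one must confirm uniformity of the constant in $O(p^{-1/2+\delta})$, which by (\ref{estPKT}) may be taken of the shape $2^{4[t]+1}d(2^{[t]}d-1)$ with $[t]\leq M\log\log p$; since $2^{O(\log\log p)}$ and $\log^{O(\log\log p)}p$ are both $o(p^{\delta})$ and $d$ is fixed, this constant is $O(p^{\delta})$ uniformly over $[0,M]$, so taking the supremum over the compact set yields uniform convergence. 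The substitution $k=0$ itself is routine once Theorem \ref{thmPKT} is in hand, so I expect the only genuine (albeit modest) obstacle to be precisely this last bookkeeping: verifying that the exponential of the $O$-term and the $\log p$-power constant hidden in the $p^{-1/2+\delta}$ error remain controlled uniformly across the compact range of $\lambda$.
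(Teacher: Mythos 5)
Your proposal is correct and follows essentially the same route as the paper: identify $\mu(\lambda,\alpha,\beta)=P_0(t)$ with $t=\lambda p/\abs{\CI}$, specialize Theorem \ref{thmPKT} at $k=0$, and then note that all implied constants are uniform for $\lambda$ in a compact set. The paper's own proof is merely terser --- it records that uniform convergence ``comes for free'' because (unlike the primitive-root setting of \cite{CoZa98}) every sufficiently large $p$ satisfies the hypotheses of Theorem \ref{thmPKT} --- and your bookkeeping of the exponent $O((1+\lambda(\beta-\alpha))\abs{\CI}/p)$ and of the constant $2^{4[t]+1}d(2^{[t]}d-1)$ hidden in $O(p^{-1/2+\delta})$ simply makes that remark explicit.
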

\begin{proof}
The only thing we still need to prove is the uniform convergence on compact subsets. Unlike the primitive root case considered in \cite{CoZa98}, this comes for free, since if $p$ is large enough, \textit{every} $p$ satisfies the
conditions of Theorem \ref{thmPKT}.
\end{proof}

An important special case is obtained by letting $\CJ$ to be the full interval $[0,p)$, and $g(P,P_0)=x(P)$, the $x$-coordinates of the base point. Then if we let $\mu(\lambda)=\mu(\lambda,0,1)$, this is the proportion of consecutive $x$-coordinates in $S_{\CI}$ whose distances are greater than $\lambda p/\abs{\CI}$. We get the following direct analogue of the primitive root case considered in \cite{CoZa98}.

\begin{cor}\label{corML}
For any $\delta>0$, under the conditions of Theorem \ref{thmPKT}, and the
additional condition that $\CJ$ is the full interval $[0,p)$, then as $p$
tends to infinity,
the distribution of the number of $x$-coordinates in $S_{\CI}$ in short
intervals approaches a Poisson distribution with parameter $\lambda$:
\[\ds P_k(t) \sim e^{-\lambda}\frac{\lambda^k}{k!}.\]
Also, the distribution of the distances between consecutive $x$-coordinates satisfies
\begin{equation*}
\mu(\lambda) = e^{-\lambda}e^{O((1+\lambda)/\log\log{p})} + O(p^{\frac{-1}{2}+\delta}).
\end{equation*}
In particular, as $p\rightarrow\infty$,
\begin{equation*}
\lim_{p\rightarrow\infty}\mu(\lambda) = e^{-\lambda}.
\end{equation*}
\end{cor}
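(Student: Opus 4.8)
The plan is to obtain Corollary \ref{corML} as the specialization $\alpha=0$, $\beta=1$ (so that $\beta-\alpha=1$) of Theorem \ref{thmPKT} and Corollary \ref{corMLab}, corresponding to the choice $\CJ=[0,p)$ together with $g(P,P_0)=x(P)$. The first observation is that for this $g$ and this $\CJ$ the side condition $g(P_i,P_{i+1})\in\CJ$ reads $x_i\in[0,p)$, which holds for every point; hence $S_{\CI,\CJ,P}=S_{\CI}$, the distortion disappears, and $P_k(t)$ (respectively $\mu(\lambda)$) counts exactly the undistorted configurations described in the statement of the corollary.

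First I would confirm that the hypotheses of Theorem \ref{thmPKT} are in force: the assumptions $t=O(\log\log p)$, $\abs{\CI}\geq p(\log\log p)^2/\log p$ and $\abs{\CI}=o(p)$ are inherited from the ``conditions of Theorem \ref{thmPKT}'', while $\CJ=[0,p)$ is the admissible interval $[\alpha p,\beta p)$ with $\alpha=0$, $\beta=1$. Substituting $\beta-\alpha=1$ into the conclusion of Theorem \ref{thmPKT} gives
\[
P_k(t) = e^{-\lambda}\frac{\lambda^k}{k!}\, e^{O((1+k+\lambda)\abs{\CI}/p)}\Bigl[1 + O\bigl(k^2\abs{\CI}/(\lambda p)\bigr)\Bigr] + O(p^{-1/2+\delta}).
\]
Since $\abs{\CI}=o(p)$ while $k$ and $\lambda$ stay fixed, the exponential factor and the bracketed factor both tend to $1$ and the last term vanishes, yielding $P_k(t)\sim e^{-\lambda}\lambda^k/k!$. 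For the distance statement I would set $\alpha=0$, $\beta=1$ directly in Corollary \ref{corMLab}, obtaining $\mu(\lambda)=e^{-\lambda}e^{O((1+\lambda)/\log\log p)}+O(p^{-1/2+\delta})$; letting $p\to\infty$ and using that $(1+\lambda)/\log\log p\to 0$ and $p^{-1/2+\delta}\to 0$ then produces $\lim_{p\to\infty}\mu(\lambda)=e^{-\lambda}$.

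The one step that is not mere substitution, and which I expect to be the crux, is justifying that the linear-independence hypothesis running through the chain Lemma \ref{ptsinCH} $\Rightarrow$ Theorem \ref{ptsinAB} $\Rightarrow$ Theorem \ref{thmPKT} may be dispensed with here. With $g=x(P)$ one has $g_i=g(P,P_i)=x$ for every $i$, so $g_1=\dots=g_r=x$ and the set $\{1,g_1,\dots,g_r\}$ is highly dependent; the theorems cannot be invoked verbatim. The resolution is precisely the mechanism of Remark \ref{remimp} and the remark immediately following Lemma \ref{ptsinCH}: because $\CJ$ is the full interval, the exponential sums attached to the $g_i$ factor out trivially and the $g_i$ need never enter the set to which Bombieri's estimate (Lemma \ref{estexp}) is applied, so the linear-independence assumption is removed outright. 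The only independence actually used is then that of the remaining functions $\{1,y,y_1,\dots,y_r\}$ coming from the non-full coordinates, and this is automatic: as shown in the proof of Lemma \ref{ptsinCH}, these square roots sit inside the degree-$2^{r+1}$ multiquadratic function field of $C_{\CH}$ and so are linearly independent. Once this compatibility across the three earlier results is verified, the remainder of the argument is the routine passage to the limit indicated above.
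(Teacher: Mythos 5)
Your proposal is correct and follows essentially the same route as the paper: Corollary \ref{corML} is obtained there precisely by specializing Theorem \ref{thmPKT} and Corollary \ref{corMLab} to $\alpha=0$, $\beta=1$, $g(P,P_0)=x(P)$, with the degenerate linear independence of $\{1,g_1,\dots,g_r\}$ handled exactly as you say, via Remark \ref{remimp} (as recorded in the remark following Lemma \ref{ptsinCH}, the hypothesis is dropped when $\CJ=[0,p)$). Your explicit verification that the remaining functions $\{1,y,y_1,\dots,y_r\}$ are independent in the multiquadratic function field is a point the paper leaves implicit, but it is consistent with the paper's argument rather than a different one.
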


\begin{remark}
It is not absolutely necessary to consider $(x,y)\in C$ to lie in the rectangle $[0,p)\times\CI$. For example, by a linear change of variable, we can consider any parallelogram which has length $p$ (in the $x$-direction), as long as the $y$-coordinates of the rectangle lie totally inside $[0,(p-1)/2]$, and the width (in the $y$-direction) satisfies the requirement for $\abs{\CI}$.

For more general domains with piecewise smooth boundaries, one can apply the Lipschitz's principle on the number of integer points \cite{Dav51}. In that case, the error term will be much weaker, but the limiting process is still valid.
\end{remark}

\begin{remark}
It may also be interesting to see what happens if $\CI$ is too big, say $\CI=[0,(p-1)/2]$. From (\ref{estPKT}), which is still valid for this big $\CI$, we have
\begin{equation*}
P_k(t) = \binom{[t]}{k} \left( \frac{\abs{\CI}\abs{\CJ}}{p^2} \right)^k \left( 1-\frac{\abs{\CI}\abs{\CJ}}{p^2} \right)^{[t]-k} + O(p^{-1/2}\log^{2[t]+3}{p}).
\end{equation*}
As $p\rightarrow\infty$ with $\lambda=t\abs{\CI}/p$ kept constant, this just
gives
\[\ds P_k(t)\rightarrow\binom{[t]}{k} \left(\frac{1}{2}(\beta-\alpha)\right)^k\left(1-\frac{1}{2}(\beta-\alpha)\right)^{t-k}\]
and hence
\[\ds P_0(t)\rightarrow(1-1/2(\beta-\alpha))^t\sim(1-1/2(\beta-\alpha))^{2\lambda},\]
which is never close to something like $e^{-\lambda}$. The reason is that as $p\rightarrow\infty$, $t$ does not go to infinity accordingly, but stays more or less constant.
\end{remark}

\section{An application}

As an application of our results, we consider the distribution of $x$-coordinates of points on an elliptic curve in a rectangle, and the distortion of the distribution by the group law. More precisely, let $E$ be an elliptic curve defined by $y^2=x^3+ax+b$ over $\mathbb{F}_p$. Let $\CI\subset[0,(p-1)/2]$ be an interval satisfying $\abs{\CI}\leq p/\log\log{p}$. We order the points $P_i=(x_i,y_i)$ of $C$ in the rectangle $[0,p)\times\CI$ according to the size of the $x$-coordinates: $0\leq x_1 < \dots < x_m < p$.

We are interested in the distribution of the distances between consecutive $x$-coordinates $x_{i+1}-x_i$, where $x_{m+1}=x_1+p$. By Corollary \ref{corML}, the proportion of distances at least $\lambda$ times the asymptotic average $p/\abs{\CI}$ satisfies
\begin{equation*}
\lim_{p\rightarrow\infty}\frac{\#\{ i:1\leq i \leq m, x_{i+1}-x_i \geq \lambda p/\abs{\CI} \}}{m} = e^{-\lambda}.
\end{equation*}

We now look at how the group law of the elliptic curve may distort
the above distribution. Recall that (see for example \cite{Silbook})
if $P_1=(x_1,y_1), P_2=(x_2,y_2)$ are two points in $C$ with $x\neq x_0$,
the group law on $C$ reads
\begin{align}
x(P_1+P_2) &= \left(\frac{y_2-y_1}{x_2-x_1}\right)^2-x_1-x_2, \nonumber \\
y(P_1+P_2) &= -\frac{y_2-y_1}{x_2-x_1}x(P_1+P_2)-\frac{y_1x_2-y_2x_1}{x_2-x_1}, \label{gplaw} \\
-P_1 &= (x,-y). \nonumber
\end{align}

Fix an interval $\CJ=[\alpha p,\beta p)$. We want to see the proportion of
consecutive points $P_i,P_{i+1}$ (in the above sense) for which the distances between their $x$-coordinates are large, and also the $x$-coordinates of their differences $x(P_{i+1}-P_i)$ lie inside $\CJ$. From the group law (\ref{gplaw}) above, we have
\begin{equation*}
x(P_{i+1}-P_i)= \left(\frac{y_{i+1}+y_i}{x_{i+1}-x_i}\right)^2-x_i-x_{i+1}.
\end{equation*}

Suppose $P=(x,y)$ is a base point, and $P_i=(x+h_i,y_i)$. Take $g_i(P,P_i)=x(P_i-P)=\left(\frac{y_{i}+y}{h_i}\right)^2-2x-h_i$ to be the difference map. Before applying Corollary \ref{corMLab}, we need the following lemma.
\begin{lemma}
The set $\{1,g_1,\dots,g_r\}$ is linearly independent on $C_{\CH}$ for any $\CH=\{h_1,\dots,h_r\}$.
\end{lemma}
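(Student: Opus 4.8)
The plan is to expand each $g_i$ into a ``rational part'' plus an ``irrational part'' and then compare coefficients in the natural basis of the function field of $C_{\CH}$, viewed as a multiquadratic extension of $\overline{\mathbb{F}_p}(x)$. Writing $y=\sqrt{f(x)}$ and $y_i=\sqrt{f(x+h_i)}$, a direct computation gives
\begin{equation*}
g_i = \left(\frac{y_i+y}{h_i}\right)^2 - 2x - h_i = \frac{f(x+h_i)+f(x)}{h_i^2} - 2x - h_i + \frac{2}{h_i^2}\,y y_i,
\end{equation*}
so that $g_i = A_i(x) + \frac{2}{h_i^2}\,y y_i$ with $A_i(x)\in\overline{\mathbb{F}_p}(x)$, and the only ``irrational'' contribution is the single term $y y_i = \sqrt{f(x)}\sqrt{f(x+h_i)}$. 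Note that $h_i\neq 0$ in $\mathbb{F}_p$, so $1/h_i^2$ is a genuine nonzero constant.

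Next I would invoke the structure of the function field. As in the proof of Lemma \ref{ptsinCH}, the function field of $C_{\CH}$ over $\overline{\mathbb{F}_p}$ is the multiquadratic extension $\overline{\mathbb{F}_p}(x)\left[\sqrt{f(x)},\sqrt{f(x+h_1)},\dots,\sqrt{f(x+h_r)}\right]$, of degree $2^{r+1}$ over $\overline{\mathbb{F}_p}(x)$ (the hypotheses that $f$ is not a square and that $r$ is small compared to $p$ are exactly what guarantee full degree). A basis over $\overline{\mathbb{F}_p}(x)$ is given by the $2^{r+1}$ products $\prod_{j\in S}w_j$ indexed by subsets $S\subseteq\{0,1,\dots,r\}$, where $w_0=y$ and $w_i=y_i$. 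In particular $1$ and $y y_1,\dots,y y_r$ are distinct basis vectors (the latter corresponding to the subsets $\{0,i\}$), hence linearly independent over $\overline{\mathbb{F}_p}(x)$, and a fortiori over the constant field $\overline{\mathbb{F}_p}$.

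Now suppose $c_0 + \sum_{i=1}^r c_i g_i = 0$ on $C_{\CH}$ with $c_j\in\overline{\mathbb{F}_p}$. Substituting the expansion yields
\begin{equation*}
\left(c_0 + \sum_{i=1}^r c_i A_i(x)\right) + \sum_{i=1}^r \frac{2c_i}{h_i^2}\,y y_i = 0.
\end{equation*}
Since each $A_i(x)$ lies purely in the $\emptyset$-component, comparing the coefficient of the basis element $y y_i$ forces $\frac{2c_i}{h_i^2}=0$, so $c_i=0$ for every $i$; the remaining $\emptyset$-component then reduces to $c_0=0$. This is the desired linear independence.

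The main obstacle is the bookkeeping in the second step: one must be certain that for distinct indices the elements $y y_i$ land in genuinely distinct slots of the multiquadratic basis and that the extension does not collapse to smaller degree. This is precisely where the distinctness of the $h_i\pmod p$ and the full degree $2^{r+1}$ established in Lemma \ref{ptsinCH} are needed. Granting that, the coefficient comparison is immediate, because each $g_i$ touches exactly one irrational basis vector and no two of these coincide.
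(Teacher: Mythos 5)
Your proof is correct and follows essentially the same route as the paper: expand each $g_i$ so that the only irrational contribution is the cross-term $\frac{2}{h_i^2}yy_i$, then conclude $c_i=0$ by comparing coefficients and finally $c_0=0$. The only difference is that you make explicit the justification the paper leaves implicit---namely that $1, yy_1,\dots,yy_r$ sit in distinct slots of the multiquadratic basis of the function field of $C_{\CH}$ (degree $2^{r+1}$, as in Lemma \ref{ptsinCH})---which is a welcome clarification rather than a departure.
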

\begin{proof}
Fix any $\CH=\{h_1,\ldots, h_r\}$. Suppose there are constants $c_0, c_1, \ldots, c_r\in\mathbb{F}_p$ such that
\begin{equation*}
c_0+c_1g_1+\ldots+c_rg_r=0
\end{equation*}
on $C_{\CH}$, i.e.
\begin{equation}\label{eqnli}
c_0+c_1((h_1^{-1})^2(y_1+y)^2-2x-h_1)+\ldots+c_r((h_r^{-1})^2(y_r+y)^2-2x-h_r)=0
\end{equation}
on $C_{\CH}$. Expand this equation and notice that due to the defining equations of $C_{\CH}$, all terms with $y^2$ and $y_i^2$ can be transformed into terms involving $x$ only. This gives
\begin{equation*}
2c_1(h_1^{-1})^2yy_1+\ldots+2c_r(h_r^{-1})^2yy_r+P(x)=0,
\end{equation*}
where $P(x)$ is a polynomial in $x$ with coefficients in $\mathbb{F}_p$. Hence $c_i(h_i^{-1})^2=0$ for all $i=1,2,\ldots, r$. This implies $c_i=0$ for such $i$ since $h_i\neq 0$. By \eqref{eqnli}, we also have $c_0=0$. This completes the proof of the lemma.
\end{proof}

Now we can apply Corollary \ref{corMLab} to get
\begin{equation*}
\lim_{p\rightarrow\infty}\frac{\#\{1\leq i \leq m: x_{i+1}-x_i \geq \lambda p/\abs{\CI} \text{~and~} x(P_{i+1}-P_i)\in[\alpha p,\beta p)\}}{m} = e^{-(\beta-\alpha)\lambda}.
\end{equation*}
Thus under the extra condition about the difference map between consecutive points, the distribution of the distance $x_{i+1}-x_i$ is still of the same type, but with a different constant. Note that the new distribution only depends on the length of the interval $[\alpha p,\beta p)$, but not on the group law. That is, we get similar results for any two-point rational function $g(P,P_0)$.

\subsection*{Acknowledgements}

The authors wish to thank the referee for the suggestion of many improvements to this paper.


\end{document}